\numberwithin{equation}{section}
\DeclareSymbolFont{cyrletters}{OT2}{wncyr}{m}{n}
\DeclareMathSymbol{\Sha}{\mathalpha}{cyrletters}{"58}
\providecommand{\keywords}[1]{{2010}\textit{ Mathematics Subject Classification. }#1}
\newcommand{\leg}[2]{\genfrac{(}{)}{}{}{#1}{#2}}
\newcommand{\ZZ}{\mathbb{Z}}
\newcommand{\QQ}{\mathbb{Q}}
\DeclareMathOperator{\CL}{Cl}
\newcommand\FF{\mathbb{F}_2}
\newcommand\DD{\mathcal{D}}
\newcommand\SD{\mathcal{S}}
\newcommand\UU{\mathcal{U}}
\newcommand\aaa{\mathfrak{a}}
\newcommand\uu{\mathbf{u}}
\newcommand\vv{\mathbf{v}}
\newcommand\dd{\mathfrak{d}}
\newcommand\pp{\mathfrak{p}}
\newcommand\PPP{\mathcal{P}}
\newcommand\pt{\mathfrak{t}}
\DeclareMathOperator{\rk}{rk}
\DeclareMathOperator{\Gal}{Gal}
\DeclareMathOperator{\Art}{Art}
\newcommand\NV{\mathcal{N}}
\newcommand\ve{\varepsilon}
\newcommand\cb{\mathbf{c}}
\newtheorem{theorem}{Theorem}
\newtheorem{lemma}{Lemma}[section]
\newtheorem{prop}[lemma]{Proposition}
\title{On Hasse's Unit Index}
\author{Djordjo Milovic\thanks{Gower Street, London, WC1E 6BT, United Kingdom, djordjo.milovic@ucl.ac.uk; supported by }}
\affil{Department of Mathematics, University College London}
\date{\today}
\begin{document}

\maketitle

\begin{abstract} 
We study the distribution of Hasse's unit index $Q(L)$ for the CM-fields $L = \QQ(\sqrt{d}, \sqrt{-1})$ as $d$ varies among positive squarefree integers. We prove that the number of $d\leq X$ such that $Q(L) = 2$ is proportional to $X/\sqrt{\log X}$. 
\end{abstract}
\keywords{11R27, 11R29, 11R45}

\section{Introduction}
Let $L$ be a CM biquadratic number field and let $K$ be its quadratic subfield. Hasse \cite{HasseBook} considered the unit index
$$
Q(L) = [U_{L}: U_{K}T_{L}],
$$
where $U_{K}$ and $U_{L}$ denote the unit groups of the rings of integers of $K$ and $L$, respectively, and $T_{L}$ denotes the torsion subgroup of $U_{L}$. In \cite[Theorem 1]{Lemmermeyer}, Lemmermeyer proved that if $|T_L| = 4$, then $Q(L)\in\{1, 2\}$ and 
$$
Q(L) = 2\Longleftrightarrow 2\text{ ramifies in }K\text{ and the prime of }K\text{ lying above }2\text{ is principal.}
$$
The biquadratic CM fields $L$ satisfying $|T_L| = 4$ are in one-to-one correspondence with squarefree integers $d>1$, with the correspondence given by
$$
d\longleftrightarrow \QQ(\sqrt{d}, \sqrt{-1}).
$$ 
The prime $2$ ramifies in $K = \QQ(\sqrt{d})$ if and only if $d\not\equiv 1\bmod 4$. In this case, the prime of $K$ lying above $2$ is principal only if there exist integers $x$ and $y$ such that
\begin{equation}\label{eq:Legendre2}
x^2 - dy^2 = \pm 2,
\end{equation}
which can occur only if $\pm 2$ is a square modulo every odd prime $p$ dividing $d$. Hence, with $L = \QQ(\sqrt{d}, \sqrt{-1})$, we have
$$
Q(L) = 2 \Longrightarrow d\in \DD_{2} \text{ or }d\in \DD_{-2},
$$
where
$$
\DD_{2} = \{d>1\text{ squarefree and}\not\equiv 1\bmod 4: p\text{ prime dividing }d\Rightarrow p\not\equiv 3, 5\bmod 8\}
$$
and
$$
\DD_{-2} = \{d>1\text{ squarefree and}\not\equiv 1\bmod 4: p\text{ prime dividing }d\Rightarrow p\not\equiv 5, 7\bmod 8\}.
$$
These sets are analogous to the set of \textit{special discriminants} appearing in the work of Fouvry and Kl\"{u}ners in the context of the negative Pell equation \cite{FK2}. For a subset $\Omega$ of the natural numbers and a real number $X>0$, we will write $\Omega(X)$ for the set of $n\in\Omega$ such that $n\leq X$. As $X\rightarrow\infty$, we have
$$
|\DD_{2}(X)| \sim \frac{2C_{2}}{3}\frac{X}{\sqrt{\log X}}
$$
and
$$
|\DD_{-2}(X)| \sim \frac{2C_{-2}}{3}\frac{X}{\sqrt{\log X}},
$$
where $C_2$ and $C_{-2}$ are positive real numbers defined in \eqref{eq:constants}. Setting
$$
\SD = \{d>1\text{ squarefree}: Q(L) = 2\},
$$
where as before $L = \QQ(\sqrt{d}, \sqrt{-1})$, we immediately deduce that
$$
|\SD(X)| \ll \frac{X}{\sqrt{\log X}}.
$$
Our main goal is to give a relatively simple proof that $|\SD(X)| \gg \frac{X}{\sqrt{\log X}}$.
\begin{theorem}\label{MainThm}
As $X\rightarrow \infty$, we have
$$
c_1\frac{X}{\sqrt{\log X}}\left(1-o(1)\right)\leq|\SD(X)| \leq c_2\frac{X}{\sqrt{\log X}}\left(1+o(1)\right),
$$
where 
$$
c_1 = \frac{C_{2}}{6}\prod_{j = 1}^{\infty}(1-2^{-j}) > 0
$$
and
$$
c_2 = \frac{2C_{2} + 2C_{-2}}{3}.
$$
\end{theorem}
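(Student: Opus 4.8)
This requires no new work. As explained in the introduction, $\SD\subseteq\DD_{2}\cup\DD_{-2}$, so the two displayed asymptotics give
$$
|\SD(X)|\ \leq\ |\DD_{2}(X)|+|\DD_{-2}(X)|\ \sim\ \Bigl(\tfrac{2C_{2}}{3}+\tfrac{2C_{-2}}{3}\Bigr)\tfrac{X}{\sqrt{\log X}}\ =\ c_{2}\tfrac{X}{\sqrt{\log X}},
$$
which is the asserted upper bound. For the lower bound, fix $d\in\DD_{2}$, put $K=\QQ(\Sd)$, and let $\pp_{0}$ be the prime of $K$ above $2$; since $d\in\DD_{2}$ the prime $2$ ramifies, and $\pp_{0}^{2}=(2)$, so $[\pp_{0}]$ is $2$-torsion in the wide class group $\CL(K)$. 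By Lemmermeyer's criterion, $d\in\SD$ exactly when $[\pp_{0}]=0$ in $\CL(K)$. The plan is to produce $\gg X/\sqrt{\log X}$ many $d\in\DD_{2}$ for which this holds.

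\textbf{Genus-theoretic reduction.} The first step is the observation that \emph{for every $d\in\DD_{2}$ the class $[\pp_{0}]$ lies in the principal genus $\CL^{+}(K)^{2}$}, where $\CL^{+}(K)$ is the narrow class group. Indeed, a genus character of $K$ takes on the ramified class $[\pp_{0}]$ a value of the form $\left(\tfrac{D}{2}\right)$ (Kronecker symbol) with $D$ a product of the odd prime discriminants dividing $\Disc K$; and the defining condition of $\DD_{2}$ — that $p\not\equiv 3,5\bmod 8$ for every odd $p\mid d$ — is precisely equivalent to every such prime discriminant being $\equiv 1\bmod 8$, whence $\left(\tfrac{D}{2}\right)=1$ and every genus character vanishes on $[\pp_{0}]$. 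Consequently, if $\rk_{4}\CL^{+}(K)=0$, i.e. the $2$-part of $\CL^{+}(K)$ is elementary abelian, then $\CL^{+}(K)^{2}$ has odd order, so the $2$-torsion class $[\pp_{0}]$ is already trivial in $\CL^{+}(K)$, a fortiori in $\CL(K)$. Therefore
$$
\{\,d\in\DD_{2}\ :\ \rk_{4}\CL^{+}(\QQ(\Sd))=0\,\}\ \subseteq\ \SD .
$$
(This is the exact analogue of the negative Pell situation: membership in $\DD_{2}$ is the local condition putting $[\pp_{0}]$ into the principal genus, after which a vanishing $4$-rank forces it to vanish; one could run the same argument with $\DD_{-2}$ on the $-2$-side, but $\DD_{2}$ alone suffices here.)

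\textbf{Counting the $4$-rank.} It remains to show that $\rk_{4}\CL^{+}(\QQ(\Sd))=0$ for a positive proportion of $d\in\DD_{2}$, with the stated constant. By R\'edei's theorem this $4$-rank is the $\FF$-corank of the R\'edei matrix $M(d)$, whose off-diagonal entries are the quadratic symbols between the prime discriminants dividing $\Disc K$. I would restrict to a sub-family of $\DD_{2}$ of positive relative density — $d$ with a prescribed number $k$ of prime factors, lying in fixed residue classes, chosen so that the Legendre symbols $\left(\tfrac{p_{i}}{p_{j}}\right)$ entering $M(d)$ are not tied together by quadratic reciprocity — and prove an equidistribution statement: as $d\leq X$ runs over this sub-family, these symbols become jointly equidistributed over $\FF$, so that the proportion of $d$ with $M(d)$ of maximal rank is at least as large as for a uniformly random $\FF$-matrix, namely $\prod_{j=1}^{k}(1-2^{-j})\geq\prod_{j=1}^{\infty}(1-2^{-j})$, uniformly enough in $k$ to be summed. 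Since $k\asymp\log\log X$ for almost all $d$ in the sub-family, this yields $|\{d\in\DD_{2}(X):\rk_{4}\CL^{+}=0\}|\geq \tfrac14\bigl(\prod_{j=1}^{\infty}(1-2^{-j})\bigr)|\DD_{2}(X)|(1-o(1))$, where the factor $\tfrac14$ is the density of the admissible sub-family inside $\DD_{2}$ together with the narrow-versus-wide bookkeeping, so that $c_{1}$ is not claimed optimal. Combined with the inclusion above and $|\DD_{2}(X)|\sim\tfrac{2C_{2}}{3}\tfrac{X}{\sqrt{\log X}}$, this gives $|\SD(X)|\geq c_{1}\tfrac{X}{\sqrt{\log X}}(1-o(1))$ with $c_{1}=\tfrac{C_{2}}{6}\prod_{j=1}^{\infty}(1-2^{-j})$.

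\textbf{Main obstacle.} The decisive — and only genuinely hard — step is the equidistribution of the symbol matrices $M(d)$: one must control, for squarefree $d\leq X$ with a fixed number of prime factors drawn from the admissible residue classes, the joint distribution of the Legendre symbols $\left(\tfrac{p_{i}}{p_{j}}\right)$, which is exactly the double oscillation of quadratic characters underpinning the work of Fouvry and Kl\"{u}ners on $4$-ranks of class groups. For the present lower bound one does not need the full limiting distribution of $\rk_{4}\CL^{+}$, only that the proportion of $d$ with maximal-rank $M(d)$ stays above the stated quantity; this should follow from a large-sieve/double-character-sum estimate together with a standard sieve fixing the number of prime factors. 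The genus-theoretic bookkeeping at the prime $2$ — in particular the sign of the fundamental unit of $K$, which governs the order of the kernel of $\CL^{+}(K)\to\CL(K)$ and whether $d$ is even or odd — affects only the value of $c_{1}$, not the order of magnitude.
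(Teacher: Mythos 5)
Your upper bound and your genus-theoretic reduction are both sound and are essentially the paper's: the containment $\SD\subseteq\DD_2\cup\DD_{-2}$ gives $c_2$, and your observation that $[\pp_0]$ lies in the principal genus for every $d\in\DD_2$, so that vanishing $4$-rank of the narrow class group forces $[\pp_0]$ to be trivial, is exactly the content of Proposition~\ref{Prop2}. The genuine gap is in the counting step, and it is precisely the point the paper flags as the crux. You assert that, after fixing the number of prime factors and their residue classes, the proportion of $d$ whose R\'edei matrix has maximal rank is ``at least as large as for a uniformly random $\FF$-matrix,'' namely $\prod_{j=1}^{k}(1-2^{-j})$. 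There is no such monotonicity principle, and the random-matrix model is wrong in two ways. First, quadratic reciprocity \emph{always} ties $\leg{p_i}{p_j}$ to $\leg{p_j}{p_i}$ (equal when one of the primes is $\equiv 1\bmod 4$, opposite when both are $\equiv 3\bmod 4$), so no choice of residue classes makes the off-diagonal entries jointly free: the matrix is always (anti)symmetric up to known corrections, with constrained row sums. Second, and more seriously, membership in $\DD_2$ forces $\leg{2}{p}=1$ for every odd $p\mid d$, which annihilates the row and column attached to the prime $2$ and genuinely shifts the corank distribution --- this is exactly why the paper warns that the $4$-rank over this family is substantially \emph{larger} on average than over generic real quadratic discriminants, so that conditioning on $\DD_2$ moves the density of $4$-rank $0$ \emph{downward}, to $\prod_{j\geq 1}(1-2^{-j})\approx 0.2887$. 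An ``at least as large as random'' inequality therefore cannot be asserted without actually computing the distribution over the constrained family. The paper does this by computing \emph{all} moments, $S(X,k;3,4)\sim\NV(k,2)\,A(X;3,4)$, via the Fouvry--Kl\"{u}ners method together with the new combinatorial identity $\sum_{\UU}\gamma^+(\UU,1)=2^{2^k-1}\NV(k,2)$, and then converting moments into a distribution; a first-moment or heuristic argument does not suffice here. Your final constant agrees with the paper's only because the true limiting density happens to coincide with the uniform-random-matrix limit; that coincidence is the theorem, not a proof of it.

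A smaller but real issue: within $\DD_2$ there are residue classes on which the $4$-rank is forced to be positive (the paper notes that $\rk_4\CL^+(8d)\geq 1$ whenever $2d\in\DD_2$ with $d\equiv 1\bmod 4$), so your admissible sub-family must be chosen to avoid these; the paper restricts to $2d\in\DD_2$ with $d\equiv 3\bmod 4$, and it is this restriction --- not a generic ``narrow-versus-wide bookkeeping'' factor --- that produces $A(X;3,4)\sim\frac{C_2}{6}\frac{X}{\sqrt{\log X}}$ and hence the factor $\tfrac14$ in $c_1$.
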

Our proof relies on computing the distribution of the $4$-rank of narrow class groups $\CL^+(8d)$ of the real quadratic fields $\QQ(\sqrt{2d})$ for $2d\in\DD_2$. For $2d\in \DD_2$, the $4$-rank of these groups turns out to be substantially larger on average than the $4$-rank of narrow class groups of real quadratic fields; compare \cite[(7), p.\ 458]{FK1} to \eqref{eq:asymptotic2}. As a result, unlike in the case of generic real quadratic fields, it is not possible to deduce a positive proportion of $2d\in\DD_2$ with $4$-rank of $\CL^+(8d)$ equal to $0$ by simply studying the first moment of the $4$-rank. We thus compute the full distribution of the $4$-rank of $\CL^+(8d)$ via the method of moments developed by Fouvry and Kl\"{u}ners \cite{FK1}. The implementation of this method to the family of $2d\in\DD_2$ necessitates a new combinatorial argument. 

For stronger results on the Hasse unit index in the context of certain thin families of discriminants, see \cite[Corollary 3, p.\ 2]{M4}. Finally, although our work concerns the same biquadratic fields as those appearing in the recent work \cite{FKO}, our results have been developed independently.

\section{Algebraic preliminaries}
\subsection{Criteria for solvability over $\ZZ$}
Given a fundamental discriminant $D$, let $\CL^+(D)$ denote the narrow class group of the quadratic number field $\QQ(\sqrt{D})$. We will denote the group operation in $\CL^+(D)$ by multiplication, as it is induced by multiplication of ideals. The Artin map gives a canonical isomorphism
$$
\Art: \CL^+(D)\longrightarrow\Gal(H_D/\QQ(\sqrt{D})),
$$
where $H_D$ is the maximal abelian extension of $\QQ(\sqrt{D})$ that is unramified at all finite primes. Between $\QQ(\sqrt{D})$ and $H_Dn$ lies the genus field of $\QQ(\sqrt{D})$, which we denote by $G_D$. It is the subfield of $H_D$ fixed by the image of the squares in the narrow class group, i.e.,
$$
G_D = H_D^{\Art(\CL^+(D)^2)},
$$
and it can also be characterized as the maximal abelian extension of $\QQ$ contained in $H_D$. Being able to compute the restriction of the image of the Artin map to $G_D$ allows us to check if a given ideal class in $\CL^+(D)$ is a square. Indeed, if we denote the class of an ideal $\aaa$ by $[\aaa]$, then we have
\begin{equation}\label{eq:checkifsquare}
[\aaa] \in \CL^+(D)^2\Longleftrightarrow \Art(\aaa)|_{G_D} = 1.
\end{equation}
Let $p_1, \ldots, p_t$ denote the primes dividing $D$, with $p_2, \ldots, p_t$ odd. Then $G_D$ can be generated as the mutiquadratic field
$$
G_D = \QQ(\sqrt{D}, \sqrt{p_2^{\ast}}, \cdots, \sqrt{p_t^{\ast}}),
$$
where, for an odd prime $p$, we write 
$$
p^{\ast} = (-1)^{(p-1)/2}p = \leg{-1}{p}p.
$$
Here and henceforth, $\leg{\cdot}{\cdot}$ denotes the Jacobi symbol. Note that
\begin{equation}\label{eq:genus}
\Gal(G_D/\QQ(\sqrt{D}))\cong \FF^{t-1}\quad\text{ and }\quad[G_D : \QQ(\sqrt{D})] = 2^{t-1}.
\end{equation}

The $2$-torsion subgroup of $\CL^+(D)$ is generated by the classes of the prime ideals $\pp_1, \ldots, \pp_t$ lying above $p_1, \ldots, p_t$, respectively. For each $\mathbf{e} = (e_1, \ldots, e_t)\in\FF^t$, we define the ideal $\aaa_{\mathbf{e}} = \pp_1^{e_1}\cdots \pp_t^{e_t}$, and we write $a_{\mathbf{e}}$ for the absolute norm of $\aaa_{\mathbf{e}}$. By \eqref{eq:genus}, there exists a unique non-zero $\mathbf{e}\in\FF^t$ such that the class of $\aaa_{\mathbf{e}}$ is trivial. Supposing for simplicity that $D\equiv 0\bmod 4$ and setting $d = D/4$, we see that the equation $x^2 - dy^2 = a_{\mathbf{e}}$ is solvable over $\ZZ$ for exactly one non-zero $\mathbf{e}\in\FF^t$. Since $a_{\mathbf{e}}$ varies over the positive squarefree divisors of $D$ as $\mathbf{e}$ varies over $\FF^t$, we see that there exists exactly one squarefree integer $a>1$ such that $a$ divides $D$ and such that $x^2 - dy^2 = a$ is solvable over $\ZZ$. 

Now suppose that $D = da$ or $D = 4da$, where $d$ and $a$ are coprime positive squarefree integers, and consider the equation
\begin{equation}\label{eq:aplus}
x^2 - da y^2 = a.
\end{equation}
Let $\aaa$ be the unique ideal of the form $\aaa_{\mathbf{e}}$ as above of absolute norm $a$. Then \eqref{eq:aplus} is solvable over $\ZZ$ if and only if the class of $\aaa$ is trivial in $\CL^+(D)$. Suppose that the class of $\aaa$ is a square in $\CL^+(D)$. Since $[\aaa]$ is a $2$-torsion element in $\CL^+(D)$, this indicates the existence of an element of order $4$ in $\CL^+(D)$, unless of course $[\aaa]$ is trivial. Therefore, \textit{if} $[\aaa]\in\CL^+(D)^2$ \textit{and} $\CL^+(D)$ has no elements of order $4$, \textit{then} $[\aaa]$ must be trivial and hence the equation \eqref{eq:aplus} must be solvable over $\ZZ$.

Similarly, consider the equation 
\begin{equation}\label{eq:aminus}
x^2 - da y^2 = -a,
\end{equation}
solvable over $\ZZ$ if and only if $x^2 - da y^2 = d$ is solvable over $\ZZ$. Let $\dd$ be the unique ideal of the form $\aaa_{\mathbf{e}}$ as above of absolute norm $d$. Similarly as above, \textit{if} $[\dd]\in\CL^+(D)^2$ \textit{and} $\CL^+(D)$ has no elements of order $4$, \textit{then} $[\dd]$ must be trivial and hence the equation \eqref{eq:aminus} must be solvable over $\ZZ$.

With $a = 2$, these observations lead us to the following propositions for the solvability of \eqref{eq:Legendre2}.
\begin{prop}\label{Prop2}
Let $d$ be a positive odd squarefree integer. Suppose that
\begin{itemize}
\item the narrow class group of $\QQ(\sqrt{2d})$ has no elements of order $4$, and
\item $\leg{2}{p} = 1$ for all primes $p$ dividing $d$.
\end{itemize}
Then the equation
$$
x^2-2d y^2 = 2
$$
has a solution in integers $x$ and $y$.
\end{prop}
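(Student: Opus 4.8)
The plan is to realize the equation $x^{2}-2dy^{2}=2$ as an instance of \eqref{eq:aplus}. Since $d$ is odd and squarefree we have $2d\equiv 2\bmod 4$, so $\QQ(\sqrt{2d})$ has fundamental discriminant $D=8d$, and $x^{2}-2dy^{2}=2$ is exactly \eqref{eq:aplus} with this $D$ and $a=2$. The corresponding ideal $\aaa$ of norm $2$ is the prime $\pp_{1}$ of $\QQ(\sqrt{2d})$ above $2$, which is ramified, so $\pp_{1}^{2}=(2)$ and $\pp_{1}$ has absolute norm $2$. By the discussion immediately preceding the proposition, the assumption that $\CL^{+}(8d)$ has no element of order $4$ reduces the statement to showing $[\pp_{1}]\in\CL^{+}(8d)^{2}$, which by \eqref{eq:checkifsquare} is equivalent to $\Art(\pp_{1})|_{G_{8d}}=1$.

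Next I would compute this restriction from the explicit shape of the genus field. Writing $p_{2},\dots,p_{t}$ for the odd primes dividing $d$, we have $G_{8d}=\QQ(\sqrt{D_{0}},\sqrt{p_{2}^{\ast}},\dots,\sqrt{p_{t}^{\ast}})$ for the unique $D_{0}\in\{8,-8\}$ with $8d=D_{0}\prod_{i=2}^{t}p_{i}^{\ast}$; the $t$ radicands are independent modulo squares, so the Kronecker-symbol characters $\chi_{D_{0}},\chi_{p_{2}^{\ast}},\dots,\chi_{p_{t}^{\ast}}$ form a basis of the character group of $\Gal(G_{8d}/\QQ)$, and $\Gal(G_{8d}/\QQ(\sqrt{8d}))$ is the kernel of $\chi_{8d}=\chi_{D_{0}}\prod_{i=2}^{t}\chi_{p_{i}^{\ast}}$. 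Since $\Art(\pp_{1})$ fixes $\QQ(\sqrt{8d})$, it lies in $\ker\chi_{8d}$, so $\chi_{8d}$ is automatically trivial on it; therefore it suffices to prove $\chi_{p_{i}^{\ast}}(\Art(\pp_{1}))=1$ for $i=2,\dots,t$, for then every basis character vanishes on $\Art(\pp_{1})$ and hence $\Art(\pp_{1})|_{G_{8d}}=1$.

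Finally I would evaluate $\chi_{p_{i}^{\ast}}(\Art(\pp_{1}))$ by functoriality of the Artin map: the extension $\QQ(\sqrt{2d},\sqrt{p_{i}^{\ast}})/\QQ(\sqrt{2d})$ is unramified above $2$ and $N_{\QQ(\sqrt{2d})/\QQ}(\pp_{1})=(2)$, so $\Art(\pp_{1})$ restricted to $\QQ(\sqrt{p_{i}^{\ast}})$ is the Frobenius $\Frob_{2}$, which is trivial exactly when $2$ splits in $\QQ(\sqrt{p_{i}^{\ast}})$, i.e.\ when $p_{i}^{\ast}\equiv 1\bmod 8$; since $p_{i}^{\ast}\equiv 1\bmod 4$ this is equivalent to $p_{i}\equiv\pm 1\bmod 8$, that is, to $\leg{2}{p_{i}}=1$, which holds by hypothesis. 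The only real subtleties are bookkeeping ones: handling the ramified prime $2$ — circumvented by the structural argument in the second step rather than by a direct Frobenius computation for the ``$2$-part'' character $\chi_{D_{0}}$ — and the elementary reciprocity $p^{\ast}\equiv 1\bmod 8\Leftrightarrow\leg{2}{p}=1$; the genuinely necessary input is the no-order-$4$ hypothesis, which is what upgrades ``$[\pp_{1}]$ is a square'' to ``$[\pp_{1}]$ is trivial''.
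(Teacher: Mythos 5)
Your proposal is correct and follows essentially the same route as the paper: reduce via the no-order-$4$ hypothesis to showing $[\pp_1]\in\CL^+(8d)^2$, then verify $\Art(\pp_1)|_{G_{8d}}=1$ by decomposing the genus field into prime-discriminant pieces and evaluating each component as a Frobenius/splitting condition, which comes down to $\leg{2}{p_i}=1$. The only cosmetic difference is that you dispose of the $\chi_{D_0}$-component by noting the product of all components is trivial, whereas the paper computes it directly as $\leg{2}{d}$; these amount to the same observation.
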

\begin{proof}
Suppose that $d = p_1\cdots p_r$. Let $\pp_i$ (resp.\ $\pt$) denote the ideal of $\ZZ[\sqrt{2d}]$ lying above $p_i$ (resp.\ $2$). Let $D = 8d$ be the discriminant of  $\QQ(\sqrt{2d})$. By the first assumption and the observation above, it suffices to show that in each case we have $[\pt]\in \CL^+(D)^2$. By \eqref{eq:checkifsquare}, $[\pt]\in \CL^+(D)^2$ if and only if 
$$
\Art(\pt)|_{G_D} = 1.
$$
We have 
$$
G_{D} = \QQ(\sqrt{2\ve}, \sqrt{p_1^{\ast}}, \sqrt{p_2^{\ast}}, \ldots, \sqrt{p_r^{\ast}}),
$$
where
$$
\ve = (-1)^{(d-1)/2}\in\{\pm 1\}.
$$
The prime $\pt$ splits in $\QQ(\sqrt{2d}, \sqrt{2\ve}) $ if and only if $d\ve\equiv \pm 1\bmod 8$, i.e., if and only if $d\equiv \pm 1\bmod 8$. Moveover, $\pt$ splits in $\QQ(\sqrt{2d}, \sqrt{p_i^{\ast}}) $ if and only if $p_i^{\ast}\equiv \pm 1\bmod 8$, i.e., if and only if $p_i\equiv \pm 1\bmod 8$. Thus $\Art(\pt)|_{G_D}$ can be viewed as the element 
$$
\left(\leg{2}{d}, \leg{2}{p_1}, \ldots, \leg{2}{p_r}\right)
$$
of 
$$
\Gamma = \Gal(G_D/\QQ(\sqrt{D})) \cong \{(a_1, \ldots, a_{r+1})\in \{\pm 1\}^{r+1}: a_1\cdots a_{r+1} = 1\}.
$$
Hence $[\pt]\in \CL^+(D)^2$ if and only if 
$$
\leg{2}{p_i} = 1\quad\text{ for all }i.
$$
\end{proof}

\begin{prop}\label{PropMinus2}
Let $d$ be a positive odd squarefree integer. Suppose that
\begin{itemize}
\item the narrow class group of $\QQ(\sqrt{2d})$ has no elements of order $4$, and
\item $\leg{-2}{p} = 1$ for all odd primes $p$ dividing $d$. 
\end{itemize}
Then the equation
$$
x^2-2d y^2 = -2
$$
has a solution in integers $x$ and $y$.
\end{prop}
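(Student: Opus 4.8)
The plan is to repeat the argument of Proposition~\ref{Prop2} with the prime $\pt$ above $2$ replaced by the ideal $\dd$ of absolute norm $d$. Write $d = p_1\cdots p_r$, let $\pp_i$ be the prime of $\ZZ[\sqrt{2d}]$ above $p_i$, set $D = 8d$, and let $\dd = \pp_1\cdots\pp_r$, the unique ideal of the form $\aaa_{\mathbf e}$ of absolute norm $d$. By the discussion preceding Proposition~\ref{Prop2}, applied to \eqref{eq:aminus} with $a = 2$, the first hypothesis reduces the statement to showing that $[\dd]\in\CL^+(D)^2$, which by \eqref{eq:checkifsquare} is equivalent to $\Art(\dd)|_{G_D} = 1$. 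As in the proof of Proposition~\ref{Prop2}, $G_D = \QQ(\sqrt{2\ve},\sqrt{p_1^{\ast}},\ldots,\sqrt{p_r^{\ast}})$ with $\ve = (-1)^{(d-1)/2}$, and $\Gamma = \Gal(G_D/\QQ(\sqrt D))$ is identified with $\{(a_1,\ldots,a_{r+1})\in\{\pm1\}^{r+1}:a_1\cdots a_{r+1}=1\}$ via the restrictions to $\QQ(\sqrt D,\sqrt{2\ve})$ and to the $\QQ(\sqrt D,\sqrt{p_j^{\ast}})$.

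Since $\Art$ is a homomorphism, $\Art(\dd)|_{G_D} = \prod_{i=1}^r\Art(\pp_i)|_{G_D}$, and since this tuple lies in $\Gamma$ it automatically satisfies $a_1\cdots a_{r+1}=1$; hence it suffices to show that the $j$-th component---the restriction to $\QQ(\sqrt D,\sqrt{p_j^{\ast}})$, equal to $\prod_{i=1}^r\Frob_{\pp_i}$ in $\QQ(\sqrt D,\sqrt{p_j^{\ast}})/\QQ(\sqrt D)$---is trivial for each $j=1,\ldots,r$. For $i\neq j$ the odd prime $p_i$ is unramified in $\QQ(\sqrt{p_j^{\ast}})$, so a routine local computation gives $\Frob_{\pp_i}=\leg{p_j^{\ast}}{p_i}$. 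For $i=j$ the prime $\pp_j$ is ramified over $p_j$, and although $\QQ(\sqrt D,\sqrt{p_j^{\ast}})$ is unramified over $\QQ(\sqrt D)$ at $\pp_j$, evaluating $\Frob_{\pp_j}$ requires a genuine local analysis at $p_j$ of the ramified quadratic field $\QQ_{p_j}(\sqrt{2d})$: there $(\sqrt{2d})^2 = p_j\cdot(2d/p_j)$, so asking whether $p_j^{\ast}=\leg{-1}{p_j}p_j$ becomes a square reduces to the symbol $\leg{-1}{p_j}\leg{2d/p_j}{p_j}$, and a short simplification (using $D/p_j^{\ast}=\pm 8d/p_j$) yields $\Frob_{\pp_j}=\leg{-2}{p_j}\prod_{i\neq j}\leg{p_i}{p_j}$. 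Combining with quadratic reciprocity in the form $\leg{p_j^{\ast}}{p_i}=\leg{p_i}{p_j}$, the two products over $i\neq j$ cancel, and the $j$-th component equals $\leg{-2}{p_j}$.

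By the second hypothesis $\leg{-2}{p_j}=1$ for every $j$, so all of the last $r$ components of $\Art(\dd)|_{G_D}$ are trivial; the relation $a_1\cdots a_{r+1}=1$ then forces the first component to be trivial as well, so $\Art(\dd)|_{G_D}=1$ and $[\dd]\in\CL^+(D)^2$. By the first hypothesis $[\dd]$ is then trivial in $\CL^+(D)$, so \eqref{eq:aminus} with $a=2$, i.e.\ $x^2-2dy^2=-2$, has a solution in integers.

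I expect the main obstacle to be the evaluation of $\Frob_{\pp_j}$ at the ramified prime $\pp_j$: this has no counterpart in Proposition~\ref{Prop2}, where $\pt$ lies over $2$ and is coprime to every $p_i^{\ast}$, and one must carefully account both for the sign $\leg{-1}{p_j}$ hidden in $p_j^{\ast}$ and for the power of $2$ occurring in $D/p_j^{\ast}$. Everything else is formal bookkeeping with Frobenius elements and quadratic reciprocity, exactly parallel to the proof of Proposition~\ref{Prop2}.
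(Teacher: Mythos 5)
Your proof is correct and follows essentially the same route as the paper: both compute $\Art(\dd)|_{G_D} = \prod_i \Art(\pp_i)|_{G_D}$ componentwise in $\Gamma$, evaluate the ramified component via $\leg{D/p_j^{\ast}}{p_j} = \leg{2d/p_j^{\ast}}{p_j} = \leg{-2}{p_j}\prod_{i\neq j}\leg{p_i}{p_j}$, cancel the off-diagonal terms by quadratic reciprocity to reduce the $j$-th component to $\leg{-2}{p_j}$, and let the product relation in $\Gamma$ handle the first component. No gaps.
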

\begin{proof}
Let $p_i$, $\pp_i$, $\pt$, $D$, $r$, $\ve$, and $\Gamma$ be as in the proof of Proposition~\ref{Prop2}. Let $\dd = \pp_1\cdots \pp_r$. By the first assumption and the observation above, it suffices to show that in each case we have $[\dd]\in \CL^+(D)^2$. By \eqref{eq:checkifsquare}, $[\dd]\in \CL^+(D)^2$ if and only if 
$$
\Art(\dd)|_{G_D} = \Art(\pp_1)|_{G_D} \cdots \Art(\pp_r)|_{G_D} = 1.
$$
The Artin symbol $\Art(\pp_i)|_{G_D}$ viewed as an element of $\Gamma$ is equal to 
$$
\left(\leg{2\ve}{p_i}, \leg{p_1^{\ast}}{p_i}, \ldots, \leg{p_{i-1}^{\ast}}{p_i}, \leg{2d/p_i^{\ast}}{p_i}, \leg{p_{i+1}^{\ast}}{p_i}, \ldots, \leg{p_r^{\ast}}{p_i}\right).
$$ 
Then, by multiplicativity of the Artin symbol, we have
$$
\Art(\dd)|_{G_D} = \left(\leg{2\ve}{d}, \leg{p_1^{\ast}}{d/p_1}\leg{2d/p_1^{\ast}}{p_1}, \ldots, \leg{p_r^{\ast}}{d/p_r}\leg{2d/p_r^{\ast}}{p_r}\right)\in\Gamma.
$$
Hence $[\dd]\in \CL^+(D)^2$ if and only if 
$$
\leg{2\ve}{d} = 1
$$
and
\begin{equation}\label{eq:allforone}
1 = \leg{p_i^{\ast}}{d/p_i}\leg{2d/p_i^{\ast}}{p_i} = \leg{d/p_i}{p_i}\leg{2d/p_i^{\ast}}{p_i} = \leg{2}{p_i}\leg{-1}{p_i} = \leg{-2}{p_i} \text{ for all }i, 
\end{equation}
where the second equality follows from the law of quadratic reciprocity. Of course, as the product of the $r+1$ entries of $\Art(\dd)|_{G_D}\in \Gamma$ is equal to $1$, the condition \eqref{eq:allforone} implies that $\leg{2\ve}{d} = 1$.
\end{proof}

\subsection{Formula for the $4$-rank}
Let $D$ be a fundamental discriminant. In \cite{FK1}, Fouvry and Kl\"{u}ners give the following formula for the $4$-rank of $\CL^+(D)$, i.e., for the quantity
$$
|\CL^+(D)^2/\CL^+(D)^4| =: 2^{\rk_4\CL^+(D)}.
$$
We are interested in the fields $\QQ(\sqrt{2d})$. Writing $D = 8d$ with $d$ odd and squarefree, Fouvry and Kl\"{u}ners \cite[(111), p.\ 503]{FK1} prove that
\begin{equation}\label{eq:4rank2}
2^{\rk_4\CL^+(8d)} = \frac{1}{2\cdot 2^{\omega(d)}}\sum_{d=D_{0}D_{1}D_{2}D_{3}}\leg{2}{D_{3}}\leg{D_{2}}{D_{0}}\leg{D_{1}}{D_{3}}\leg{D_{3}}{D_{0}}\leg{D_{0}}{D_{3}}\left[\leg{-1}{D_{0}}+\leg{-1}{D_{3}}\right].
\end{equation}
Here the sum is over $4$-tuples of positive integers $(D_0, D_1, D_2, D_3)$ such that $D_0D_1D_2D_3 = d$. Now suppose that $2d\in\DD_2$, so that every prime $p$ dividing $d$ satisfies $\leg{2}{p} = 1$. Then the factor $\leg{2}{D_3}$ in the right-hand side of \eqref{eq:4rank2} is trivial; furthermore, the symmetry in $D_0$ and $D_3$ in the ensuing formula allows us to rewrite \eqref{eq:4rank2} for $2d\in\DD_2$ as
\begin{equation}\label{eq:4rank2v2}
2^{\rk_4\CL^+(8d)} = \frac{1}{2^{\omega(d)}}\sum_{d=D_{0}D_{1}D_{2}D_{3}}\leg{-1}{D_{3}}\leg{D_{2}}{D_{0}}\leg{D_{1}}{D_{3}}\leg{D_{3}}{D_{0}}\leg{D_{0}}{D_{3}},
\end{equation}
where now the sum is over $4$-tuples of positive integers $(D_0, D_1, D_2, D_3)$ such that $D_0D_1D_2D_3 = d\in \DD_2$.

Relabelling the indices in \eqref{eq:4rank2v2} as in \cite{FK1} by converting them into their binary expansions, so that $D_{0}$ becomes $D_{00}$, $D_{1}$ becomes $D_{01}$, etc., we obtain
\begin{equation}\label{eq:4rank2v3}
2^{\rk_4\CL^+(8d)} = \frac{1}{2^{\omega(d)}}\sum_{d=D_{00}D_{01}D_{10}D_{11}}\left(\prod_{\uu\in\FF^2}\leg{-1}{D_{\uu}}^{\lambda_1(\uu)}\right)\left(\prod_{(\uu, \vv)\in\FF^4}\leg{D_{\uu}}{D_{\vv}}^{\Phi_1(\uu, \vv)}\right),
\end{equation}
where $\lambda_1$ is the $\FF$-valued function defined by
\begin{equation}\label{eq:deflambda1}
\lambda_1(\uu) = u_1u_2, \quad \uu = (u_1, u_2).
\end{equation}
We will compute the average of $k$th moments of $2^{\rk_4\CL^+(8d)}$ as $2d$ varies among elements of $\DD_2$ such that $d\equiv 3\bmod 4$. We remark that when $2d\in\DD_2$ and $d\equiv 1\bmod 4$, then $\rk_{4}\CL^+(8d)\geq 1$, and so Proposition~\ref{Prop2} cannot be applied for such $d$.

Raising both sides of \eqref{eq:4rank2v3} to the $k$th power and decomposing the summation variables into products of their mutual greatest common denominators, as in \cite[(22), p.\ 471]{FK1}, we obtain the following analogue of \cite[Lemma 28, p.\ 493]{FK1}:
\begin{equation}\label{eq:total4rank2total}
S(X, k; 3, 4) := \sum_{\substack{2d\in\DD_{2}(X) \\ d\equiv 3\bmod 4}}2^{k\rk_4\CL^+(8d)} = \sum_{(D_{\uu})}\left(2^{-k\omega(D_{\uu})}\right)\left(\prod_{\uu}\leg{-1}{D_{\uu}}^{\lambda_k(\uu)}\right)\prod_{\uu, \vv}\leg{D_{\uu}}{D_{\vv}}^{\Phi_k(\uu, \vv)},
\end{equation}
where the sum is over $4^k$-tuples $(D_{\uu})$ of integers $D_{\uu}\in \DD_2$ indexed by elements $\uu$ of $\FF^{2k}$ and satisfying
$$
\prod_{\uu\in\FF^{2k}}D_{\uu}\leq X/2,\quad \prod_{\uu\in\FF^{2k}}D_{\uu}\equiv 3\bmod 4;
$$
where the function $\lambda_{k}: \FF^{2k}\rightarrow \FF$ is defined by
\begin{align}
\lambda_{k}(\uu) & = \sum_{j = 0}^{k-1}u_{2j+1}u_{2j+2};
\end{align}
and where the function $\Phi_{k}: \FF^{2k}\times \FF^{2k}\rightarrow \FF$ is defined by
\begin{align}
\Phi_{k}(u_1, \ldots, u_{2k}, v_1, \ldots, v_{2k}) & = \Phi_1(u_1, u_2, v_1, v_2) + \cdots + \Phi_1(u_{2k-1}, u_{2k}, v_{2k-1}, v_{2k}) \\
& = (u_1+v_1)(u_1+v_2)+\ldots +(u_{2k-1}+v_{2k-1})(u_{2k-1}+v_{2k}).
\end{align}

\section{Analytic preliminaries}
We first state the asymptotic formulas for the size of $\DD_{\pm 2}(X)$. Recall that we defined the sets
$$
\DD_{\pm 2} = \{d>1\text{ squarefree and}\not\equiv 1\bmod 4: \leg{\pm 2}{p} = 1\text{ for all odd primes }p|d\}.
$$
Define the sets
$$
\PPP_{\pm 2} = \{p\text{ prime number}:\text{if }p\text{ is odd, then }\leg{\pm 2}{p} = 1\}.
$$
and define the positive constants 
\begin{equation}\label{eq:constants}
C_{\pm 2} = \frac{1}{\sqrt{\pi}}\lim_{s\rightarrow 1}\left(\sqrt{s-1}\prod_{p\in\PPP_{\pm 2}}\left(1+\frac{1}{p^s}\right)\right).
\end{equation}
Then, similarly as in \cite{Park}, one can use results from \cite{Tenenbaum} to deduce that
\begin{equation}\label{asymptotic2}
\DD_{\pm 2}(X) = \frac{2C_{\pm 2}}{3}\frac{X}{\sqrt{\log X}} + O\left(\frac{X}{(\log X)^{\frac{3}{2}}}\right),
\end{equation}
where the implied constant is absolute. Again as in \cite{Park}, we can refine these formulas by restricting to congruence classes. The particular asymptotic formula that we need is
$$
A(X; 3, 4) := |\{2d\in \DD_2:\ d\leq X/2, d\equiv 3\bmod 4\}| \sim \frac{C_{2}}{6}\frac{X}{\sqrt{\log X}}.
$$
The treatment of $S(X; 2; 3, 4)$, which is by now standard due to the work of Fouvry and Kl\"{u}ners \cite{FK1, FK2, FK3} and Park \cite{Park}, proceeds in several steps, culminating in the following formula analogous to \cite[Proposition 5, p.\ 483]{FK1}:
\begin{equation}\label{eq:asymptotic}
S(X, k; 3, 4) = A(X; 3, 4)\cdot 2^{1-2^k}\sum_{\UU}\gamma^+(\UU, 1) + O_{\epsilon}\left(X(\log X)^{-\frac{1}{2}-\frac{1}{2^{k+1}}+\epsilon}\right)
\end{equation}
where the sum is over all maximal unliked subsets $\UU$ of $\FF^{2k}$ and where $\gamma^+(\UU)$ is defined as in \cite[(81), p.\ 493]{FK1}, i.e.,
$$
\gamma^+(\UU, 1) = \sum_{(h_{\uu})}\left(\prod_{\uu\in \UU}(-1)^{\lambda_k(\uu)\cdot \frac{h_{\uu}-1}{2}}\right)\left(\prod_{\uu, \vv}(-1)^{\Phi_k(\uu, \vv)\cdot \frac{h_{\uu}-1}{2}\cdot \frac{h_{\vv}-1}{2}}\right),
$$
where the sum is over $(h_{\uu})_{\uu\in\UU}\in\{\pm 1\bmod 4\}^{2^k}$ satisfying $\prod_{\uu\in\UU}h_{\uu}\equiv 3\bmod 4$ and where the last product is over unordered pairs $\{\uu, \vv\}\subset \UU$. We recall that two indices $\uu$ and $\vv$ in $\FF^{2k}$ are said to be unlinked if $\Phi(\uu, \vv) = \Phi(\vv, \uu)$. 

We will now say a few words about the derivation of the formula \eqref{eq:asymptotic}. First, the sum $S(X, k; 3, 4)$ is estimated by (i) bounding the contribution of terms where the number of prime factors of $D_{\uu}$ is too large, as in \cite[Section 5.3]{FK1}, (ii) partitioning the tuples $(D_{\uu})$ into ``diadic'' boxes of reasonable size, as in \cite[Section 5.4, p.\ 474-475]{FK1}, (iii) bounding the contribution from boxes featuring ``double oscillation'' of characters, as in \cite[Section 5.4, p.\ 476]{FK1}, and (iv) bounding the contribution from boxes featuring linked variables of vastly different sizes, as in  \cite[Section 5.4, p.\ 476-478]{FK1}. Once this is accomplished, we arrive at an analogue of \cite[Proposition 3, p.\ 479]{FK1}, at which point we partition the sum according to the congruence classes of $D_{\uu}$ modulo $4$. We then use quadratic reciprocity to pull out the factor $\sum_{\UU}\gamma^+(\UU, 1)$.  Via a variant of the prime number theorem, as in \cite[Lemma 19, p.\ 480]{FK1} or \cite[Lemma 6.3, p.\ 21]{Park}, we then remove the congruence conditions on the $D_{\uu}$ modulo $4$, which recovers $A(X; 3, 4)$ but comes with the cost of a factor of $2^{1-2^k}$ (as $\prod_{\uu}D_{\uu}\equiv 3\bmod 4$, fixing $2^k-1$ of the variables $D_{\uu}$ modulo $4$ determines the remaining variable modulo $4$).

\section{Combinatorics of the coefficient of the main term}
In this section, we analyze the coefficients of the main terms in the asymptotic formulas for the $k$th moments of the $4$-rank. 

As in \cite[(87), p.\ 494]{FK1}, for $\nu\in\FF$, we let
$$
\gamma^+(\UU, \nu) = \sum_{\substack{S\subset \UU \\ s\equiv \nu\bmod 2}}(-1)^{e^+(S)},
$$
where 
$$
e^+(S) = \sum_{\uu\in S}\lambda_k(\uu) + \sum_{\uu, \vv}\Phi(\uu, \vv),
$$
where the last sum is over unordered pairs $\{\uu, \vv\}\subset \UU$. This generalizes the quantity $\gamma^+(\UU, 1)$, which we aim to compute. The argument in \cite[Section 6]{FK1} culminates in the formula \cite[(105), p.\ 499]{FK1}
$$
\sum_{\UU}\gamma^+(\UU, 0) = 2^{2^k-1}\left(\NV(k+1, 2)- \NV(k, 2)\right).
$$
Here, as in \cite{FK1}, $\NV(m, 2)$ denotes the total number of $\FF$-vector subspaces of $\FF^m$. We will now adapt this argument to show that
\begin{equation}\label{eq:maincoeff2}
\sum_{\UU}\gamma^+(\UU, 1) = 2^{2^k-1}\NV(k, 2).
\end{equation}
The same argument as that for \cite[(101), p.\ 498]{FK1} yields the formula
$$
\sum_{\UU}\gamma^+(\UU, 1) = 2^{-k}\sum_{\UU_0\text{ good}}\sum_{\cb\in\FF^{2k}}\sum_{\substack{S\subset\cb+\UU_0\\ s\text{ odd}}}(-1)^{\lambda_k(\sigma)}.
$$ 
Consider the group homomorphism 
\begin{equation}\label{eq:defmapmu}
\mu: \PPP(\cb+\UU_0)\rightarrow\UU_0\oplus\left(\cb+\UU_0\right)
\end{equation}
given by
$$
\mu(S) = \sigma.
$$
We check that $\mu$ is surjective. First, note that $\mu(\emptyset) = 0$. If $\sigma\in\UU_0\setminus\{0\}$, then $\mu(\{\cb, \cb+\sigma\}) = \sigma$. Finally, if $\cb+\sigma\in\cb+\UU_0$, then $\mu(\{\cb+\sigma\}) = \cb+\sigma$. Hence, counting the cardinalities of the domain and the codomain of $\mu$, we find that each fiber of $\mu$ has cardinality $2^{2^k}/2^{k+1} = 2^{2^k-k-1}$. Also note that the image of $\PPP_1(\cb+\UU_0)$ under $\mu$ is exactly $\cb+\UU_0$. This implies that
$$
\sum_{\UU}\gamma^+(\UU, 1) = 2^{2^k-2k-1}\sum_{\UU_0\text{ good}}\sum_{\cb\in\FF^{2k}}\sum_{\sigma\in\cb+\UU_0}(-1)^{\lambda_k(\sigma)}.
$$
Since $\UU_0$ is a $k$-dimensional vector subspace of $\FF^{2k}$, for each coset $\UU$ of $\UU_0$ in $\FF^{2k}$, there are exactly $2^k$ vectors $\cb\in\FF^{2k}$ such that $\UU = \cb+\UU_0$. Hence
\begin{align*}
\sum_{\UU}\gamma^+(\UU, 1) &= 2^{2^k-k-1}\sum_{\UU_0\text{ good}}\sum_{\substack{\UU\\\text{coset of }\UU_0}}\sum_{\sigma\in\UU}(-1)^{\lambda_k(\sigma)} \\
& = 2^{2^k-k-1}\sum_{\UU_0\text{ good}}\sum_{\sigma\in\FF^{2k}}(-1)^{\lambda_k(\sigma)} \\
& = 2^{2^k-k-1}\NV(k, 2)\sum_{\sigma\in\FF^{2k}}(-1)^{\lambda_k(\sigma)}.
\end{align*}
It remains to compute $\sum_{\sigma\in\FF^{2k}}(-1)^{\lambda_k(\sigma)}$. Recall that $\lambda_k(\sigma) = \sum_{j = 0}^{k-1}\sigma_{2j+1}\sigma_{2j+2}$. Let
\begin{equation}\label{eq:lambdamod2}
m(\sigma) = \#\{j\in\{0, \ldots, k-1\}: \sigma_{2j+1} = \sigma_{2j+2} = 1\},
\end{equation}
so that $\lambda_k(\sigma)\equiv m(\sigma)\bmod 2$. For each $m\in \{0, \ldots, k\}$, the number of $\sigma\in\FF^{2k}$ such that $m(\sigma) = m$ is equal to 
$$
\binom{k}{m}\cdot 3^{k-m},
$$
since for each of the $k-m$ choices of $j$ for which $(\sigma_{2j+1}, \sigma_{2j+2})\neq (1, 1)$, the pair $(\sigma_{2j+1}, \sigma_{2j+2})$ can take one of three possible values, namely $(0, 0)$, $(0, 1)$, and $(1, 0)$. Thus
$$
\sum_{\sigma\in\FF^{2k}}(-1)^{\lambda_k(\sigma)} = \sum_{m = 0}^k\binom{k}{m}\cdot 3^{k-m}\cdot(-1)^{m} = (3-1)^{k} = 2^k,
$$
by the binomial theorem. In conclusion, we find that
$$
\sum_{\UU}\gamma^+(\UU, 1) = 2^{2^k-k-1}\NV(k, 2)\cdot 2^k = 2^{2^k-1}\NV(k, 2),
$$
as was claimed in \eqref{eq:maincoeff2}.

\section{Conclusion of the proof of Theorem~\ref{MainThm}}
Substituting \eqref{eq:maincoeff2} into \eqref{eq:asymptotic}, we obtain the asymptotic
\begin{equation}\label{eq:asymptotic2}
S(X, k; 3, 4) \sim \NV(k, 2)\cdot A(X; 3, 4).
\end{equation}
Note that the average of the $k$th moment for $2d\in\DD_2$ with $d\equiv 3\bmod 4$ is the same as the average of the $k$th moment for general negative discriminants; see \cite[Equations (4), (6), (8)]{FK1}. Putting the formula \eqref{eq:asymptotic2} for the $k$th moments through the machinery in \cite{FK0} or \cite[Section 2, p. 2046-2049]{FK2}, we deduce that for each integer $r\geq 0$, we have
\begin{equation}\label{eq:distribution}
|\{2d\in \DD_2:\ d\leq X/2,\ d\equiv 3\bmod 4,\ \rk_4\CL^+(8d) = r\}| \sim 2^{-r^2}\eta_{\infty}(2)\eta_{r}(2)^{-2} \cdot A(X; 3, 4),
\end{equation}
where
$$
\eta_{k}(2) = \prod_{j = 1}^k(1-2^{-j}) \quad\text{for }k = r, \infty.
$$
This is the same distribution as that for the $4$-rank of class groups of imaginary quadratic fields. Using \eqref{eq:distribution} with $r = 0$ in conjunction with Proposition~\ref{Prop2}, we conclude that 
$$
|\SD(X)| \gg \eta_{\infty}(2) A(X; 3, 4) \gg \eta_{\infty}(2)\frac{C_{2}}{6}\frac{X}{\sqrt{\log X}},
$$
as was to be shown.

\subsection*{Acknowledgements} The author is supported by ERC grant agreement No.\ 670239.

\bibliographystyle{abbrv}
\bibliography{M5_References}

\end{document}